\newtheorem{theorem}{Theorem}[section]
\newtheorem{lemma}[theorem]{Lemma}
\theoremstyle{definition}
\newtheorem{remark}[theorem]{Remark}
\newtheorem{example}[theorem]{Example}
\title{{\bf\Large Problems with mean curvature-like operators and three-point boundary conditions}}
\author{{\large Dionicio Pastor Dallos Santos }\footnote{Email: dionicio@ime.usp.br}\hspace{2mm}
{\bf\large}\vspace{1mm}\\
{\small Department of Mathematics, IME-USP, Cidade Universit\'aria,}\\
{\small  CEP 05508-090, S\~ao Paulo, SP, Brazil}
}
\date{}
\begin{document}
\maketitle

\begin{abstract}
In this paper we study the existence of solutions for a new class of nonlinear differential equations with three-point boundary conditions. Existence of solutions are obtained by using the Leray-Schauder degree.
\end{abstract}

 \medskip

\noindent 
Mathematics Subject Classification (2010). 34B15; 47H11. 

\noindent 
Key words:  boundary value problems, Leray-Schauder degree, mean curvature-like operators.

 \medskip
 \noindent
 The author declares that there is no conflict of interest regarding the publication of this article.


\section{Introduction}
The purpose of this article is to obtain  some existence results for nonlinear boundary value problems of the form
\begin{equation}\label{equa1}
\left\{\begin{array}{lll}
(\varphi(u' ))'   = f(t,u,u') & & \\
l(u,u')=0, 
\end{array}\right.
\end{equation}
where $l(u,u')=0$ denotes the boundary conditions  $u(0) = u'(0)= u'(T)$ \  or  \  $u(0) = u(T)= u'(T)$  on the interval $\left[0,T\right]$, $\varphi:\mathbb{R}\rightarrow (-a,a)$ is a  homeomorphism such that $\varphi(0)=0$, $f:\left[0, T\right]\times \mathbb{R} \times \mathbb{R}\rightarrow \mathbb{R}$ is a  continuous function, and  $a$ and $T$ are positive real numbers. The course, a \textsl{solution } of (\ref{equa1}) is  a function $u:\left[0, T\right]\rightarrow \mathbb{R}$ of class $C^{1}$, satisfying  the boundary conditions, such that  \ $\varphi(u')$ is continuously differentiable and   verifies $(\varphi(u'(t) ))'  = f(t,u(t),u'(t))$ for all $t\in\left[0,T\right]$.
 
Several papers have been recently devoted to  the study of nonlinear ordinary differential equations of the form (\ref{equa1}), where $l(u,u')=0$ denotes the periodic, Neumann or Dirichlet boundary conditions. In particular, for $\varphi(s)=s/\sqrt{1+s^{2}}$ and Dirichlet conditions, one can consult \cite{bon, bon1, bru, li}.

In \cite{pierluigi}, the authors have studied the problem (\ref{equa1}), where  $f : [0, T]\times \mathbb{R}^{n} \times \mathbb{R}^{n} \rightarrow \mathbb{R}^{n}$  is a Carath\'eodory function, $\varphi: \mathbb{R}^{n} \rightarrow  B_{1}(0)\subset \mathbb{R}^{n}$,  and $l(u,u')=0$ denotes the  periodic  boundary conditions. They obtained the existence of solutions by means of the Leray-Schauder degree theory. The interest in this class of
nonlinear operators $u\mapsto (\varphi(u' ))' $  is mainly due to the fact that they include  the mean curvature operator
\begin{center}
$u\mapsto $div$ \left(\frac{\nabla u}{\sqrt{1+ \left|\nabla u\right|^{2}}}\right)$.
\end{center}

In 2006, C. Bereanu and J. Mawhin \cite{ma}, using the Leray-Schauder degree theory, studied the nonlinear problems of the form
\[
\left\{\begin{array}{lll}
(\varphi(u' ))'   = f(t,u,u') & & \\
u(0)=0=u(T) & & \quad \quad 
\end{array}\right.
\] 
and
\[
\left\{\begin{array}{lll}
(\varphi(u' ))'   = f(t,u,u') & & \\
u'(0)=0=u'(T), & & \quad \quad 
\end{array}\right.
\] 
where  $f:\left[0, T\right]\times \mathbb{R} \times \mathbb{R}\rightarrow \mathbb{R}$ is a  continuous function and $\varphi:\mathbb{R}\rightarrow (-a,a)$ is a  homeomorphism such that $\varphi(0)=0$. They obtained the following existence theorems, respectively.
\begin{theorem} If the function  $f$ satisfies the condition                       
\begin{center}
$ \exists \, c>0 \text{ such that }  \left|f(t,x,y)\right|\leq c < \frac{a}{2T},$ \quad $\forall (t,x,y)\in[0, T]\times \mathbb{R}\times\mathbb{R}$,
\end{center}
the Dirichlet problem  has  at least  one solution.
\end{theorem}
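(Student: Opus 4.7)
The plan is to recast the Dirichlet problem as a fixed-point equation in $C^{1}([0,T])$ and apply the Leray--Schauder degree. Given $u \in C^{1}([0,T])$ and $\lambda \in [0,1]$, I would consider the family of Dirichlet problems $(\varphi(w'))' = \lambda f(t, u(t), u'(t))$, $w(0) = w(T) = 0$. A first integration yields $\varphi(w'(t)) = \sigma + \lambda F_u(t)$ where $F_u(t) := \int_{0}^{t} f(s, u(s), u'(s))\,ds$, whence
\[
w(t) = \int_{0}^{t} \varphi^{-1}\!\bigl(\sigma + \lambda F_u(s)\bigr)\,ds,
\]
and the condition $w(T) = 0$ forces $\sigma$ to be a root of $\Phi_{u,\lambda}(\sigma) := \int_{0}^{T} \varphi^{-1}(\sigma + \lambda F_u(s))\,ds$. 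Since $\|F_u\|_\infty \leq cT < a/2$, for each $\sigma \in [-a/2, a/2]$ the argument of $\varphi^{-1}$ stays inside $(-a,a)$; strict monotonicity of $\varphi^{-1}$ makes $\Phi_{u,\lambda}$ strictly increasing, and one checks $\Phi_{u,\lambda}(-a/2) < 0 < \Phi_{u,\lambda}(a/2)$, so a unique root $\sigma = \sigma(u,\lambda)$ exists. This defines an operator $\mathcal{M}(u,\lambda) := w$ on $C^{1}([0,T]) \times [0,1]$, which I would show is completely continuous by the standard Ascoli--Arzel\`a argument applied to the above integral representation.

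The key a priori estimate: if $u = \mathcal{M}(u,\lambda)$, then $u$ solves the perturbed Dirichlet problem, so Rolle's theorem yields $t_0 \in (0,T)$ with $u'(t_0) = 0$, and hence $\varphi(u'(t_0)) = 0$. Integrating $(\varphi(u'))' = \lambda f$ from $t_0$ to $t$ gives
\[
|\varphi(u'(t))| \leq \lambda\, c\, |t - t_0| \leq cT < a/2
\]
for every $t \in [0,T]$. Consequently $u'(t)$ is confined to the compact set $K := \varphi^{-1}([-a/2, a/2])$, producing uniform bounds $\|u'\|_\infty \leq M := \max_{s \in K}|s|$ and $\|u\|_\infty \leq MT$, independent of $\lambda$.

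At $\lambda = 0$, the operator $\mathcal{M}(\cdot,0)$ is identically zero, since the only solution of $(\varphi(w'))' = 0$ satisfying $w(0) = w(T) = 0$ is $w \equiv 0$. Hence $\deg(I - \mathcal{M}(\cdot, 0), B_R, 0) = \deg(I, B_R, 0) = 1$ for any ball $B_R \subset C^{1}([0,T])$ centred at the origin. Choosing $R > MT + M$ so that all fixed points of $\mathcal{M}(\cdot, \lambda)$ lie strictly inside $B_R$, homotopy invariance of the Leray--Schauder degree yields $\deg(I - \mathcal{M}(\cdot, 1), B_R, 0) = 1 \neq 0$, producing a fixed point of $\mathcal{M}(\cdot, 1)$; this is a solution of the Dirichlet problem.

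The principal technical obstacle is verifying that $(u, \lambda) \mapsto \sigma(u, \lambda)$ is continuous, and thereby that $\mathcal{M}$ is completely continuous. This is precisely where the strict inequality $c < a/(2T)$ earns its keep: it keeps all arguments of $\varphi^{-1}$ uniformly bounded away from the endpoints $\pm a$, so $\varphi^{-1}$ is uniformly continuous on the relevant compact subset of $(-a, a)$, and the implicit equation defining $\sigma$ depends continuously on the data through the strict monotonicity of $\Phi_{u,\lambda}$.
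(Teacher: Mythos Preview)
Your proof is correct and follows the same strategy that Bereanu and Mawhin used in the reference the paper cites for this result (the paper quotes this theorem as background from \cite{ma} and does not supply its own proof). The shooting constant $\sigma(u,\lambda)$ you construct by hand is exactly the functional $Q_{\varphi}$ packaged in Lemma~\ref{lebema}, and your homotopy to $\lambda=0$ with $\mathcal{M}(\cdot,0)\equiv 0$ is the same degree computation the paper carries out in Theorem~\ref{fot} for its own boundary conditions.
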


\begin{theorem}
 Let $f$ be continuous. Assume that  $f$ satisfies the following conditions.
\begin{enumerate}
\item  There exists $c \in C$ such that $\left\|c^{-}\right\|_{L^{1}}< \frac{a}{2}$ and $f(t,x,y)\geq c(t)$ for all $(t,x,y)\in [0, T]\times \mathbb{R}\times \mathbb{R}$. 
\item There exist $R>0$ and $\epsilon \in \left\{-1,1\right\}$ such that
\begin{center}
$ \epsilon \int_0^T f(t,u(t),u'(t))dt >0$ \ if \ $u_{m}\geq R,  \ \ \left\|u'\right\|_{\infty}\leq M$,
\end{center}
\begin{center}
$\epsilon \int_0^T f(t,u(t),u'(t))dt <0$ \ if \ $u_{M}\leq -R,  \ \ \left\|u'\right\|_{\infty}\leq M$,
\end{center}
\end{enumerate}
where $L=\max\left\{ \left| \varphi^{-1}(2 \left\|c^{-}\right\|_{L^{1}} ) \right|, \left|\varphi^{-1}(-2\left\|c^{-}\right\|_{L^{1}} )\right| \right\}$. Then the Neumann problem has at least one solution.
\end{theorem}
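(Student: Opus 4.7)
The plan is to apply Leray--Schauder degree theory to a completely continuous fixed-point reformulation of the Neumann problem, in the spirit of the approach already used for periodic problems with bounded $\varphi$. First, I would rewrite the problem as an integral equation. If $u$ solves the Neumann BVP then, integrating once and using $u'(0)=0$, we obtain $\varphi(u'(t))=\int_0^t f(s,u(s),u'(s))\,ds$; the second boundary condition $u'(T)=0$ then forces the compatibility condition $\int_0^T f(s,u(s),u'(s))\,ds=0$. This suggests defining an operator $\mathcal{F}\colon C^1([0,T])\to C^1([0,T])$ by
\[
\mathcal{F}(u)(t)=u(0)+\frac{1}{T}\int_0^T f(s,u,u')\,ds+\int_0^t\varphi^{-1}\!\left(\int_0^s f(r,u,u')\,dr\right)ds,
\]
whose fixed points coincide with the solutions of the BVP (the mean-value term is a standard device to automatically enforce the compatibility condition at a fixed point). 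Complete continuity of $\mathcal{F}$ follows from the continuity of $\varphi^{-1}$ and the Arzel\`a--Ascoli theorem applied to the inner integral.

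Next I would introduce the homotopy $u=\mathcal{F}_\lambda(u)$, $\lambda\in[0,1]$, where $\mathcal{F}_1=\mathcal{F}$ and $\mathcal{F}_0$ is the simpler operator obtained by multiplying the nonlinear part (but not the mean-value projector) by $\lambda$; equivalently, I consider the auxiliary BVPs $(\varphi(u'))'=\lambda f(t,u,u')$ subject to an integral condition that survives the homotopy. Solutions in the homotopy satisfy $\int_0^T f(s,u,u')\,ds=0$ and $\varphi(u'(t))=\lambda\int_0^t f(s,u,u')\,ds$.

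The a priori estimates split into two parts. For the derivative, hypothesis~1 ($f\geq c$ with $\|c^-\|_{L^1}<a/2$) together with $\int_0^T f=0$ gives $\int_0^T f^-(t)\,dt=\int_0^T f^+(t)\,dt\leq \|c^-\|_{L^1}$, and hence $|\varphi(u'(t))|\leq 2\|c^-\|_{L^1}<a$ for every $t$; applying $\varphi^{-1}$ and using the monotonicity of $\varphi$ yields $\|u'\|_\infty\leq L$. For the function itself, with $\|u'\|_\infty\leq L$ now available, hypothesis~2 becomes applicable: since $\int_0^T f(t,u,u')\,dt=0$, the two inequalities in hypothesis~2 rule out $u_m\geq R$ and $u_M\leq -R$, so there exists $\tau\in[0,T]$ with $|u(\tau)|<R$; integrating $u'$ from $\tau$ gives $\|u\|_\infty\leq R+LT$.

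Finally, these bounds confine all solutions of $u=\mathcal{F}_\lambda(u)$ to a fixed open ball $\Omega\subset C^1([0,T])$, so by homotopy invariance $\deg_{LS}(I-\mathcal{F}_1,\Omega,0)=\deg_{LS}(I-\mathcal{F}_0,\Omega,0)$. At $\lambda=0$ the problem collapses: the equation $(\varphi(u'))'=0$ with Neumann conditions forces $u$ constant, so the reduction formula for Leray--Schauder degree expresses the degree as a Brouwer degree on $\mathbb{R}$ of a scalar map like $c\mapsto\int_0^T f(t,c,0)\,dt$, whose sign on $\{|c|=R\}$ is controlled precisely by hypothesis~2, yielding $|\deg_{LS}(I-\mathcal{F}_0,\Omega,0)|=1\neq 0$. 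Existence of a solution then follows. The main technical obstacle I expect is step~3(b): one must design the homotopy carefully enough that hypothesis~2 continues to force $u_m<R$ and $u_M>-R$ for \emph{all} $\lambda\in(0,1]$ (not only $\lambda=1$), and that the $\lambda=0$ limit is genuinely of finite-dimensional reduction type so that the degree can be computed from $\int_0^T f(t,c,0)\,dt$.
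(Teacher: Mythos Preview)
The paper does not prove this theorem; it is quoted in the introduction as a result of Bereanu and Mawhin \cite{ma} to motivate the new boundary conditions treated here. So there is no in-paper proof to compare against directly. That said, your plan is essentially the Bereanu--Mawhin scheme, and the paper deploys the same machinery in Section~3 for the conditions $u(0)=u'(0)=u'(T)$, so one can compare against that.

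Your outline is correct in its main lines: the fixed-point reformulation, the homotopy, the a~priori bounds (derivative first via hypothesis~1 and $\int_0^T f=0$, then $u$ via hypothesis~2), and the Brouwer-degree computation at $\lambda=0$ all go through as you indicate. The obstacle you flag at the end is not a real problem: evaluating $u=\mathcal{F}_\lambda(u)$ at $t=0$ forces $QN_f(u)=0$ for every $\lambda$, so hypothesis~2 applies uniformly in $\lambda\in(0,1]$; and at $\lambda=0$ the range of $\mathcal{F}_0=P+QN_f$ lies in the constants, so the reduction to $c\mapsto -\tfrac{1}{T}\int_0^T f(t,c,0)\,dt$ is clean and hypothesis~2 gives $|\deg_B|=1$.

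The genuine gap you overlook is the \emph{domain} of your operator. Since $\varphi:\mathbb{R}\to(-a,a)$ has bounded range, $\varphi^{-1}$ is only defined on $(-a,a)$, and $\mathcal{F}$ contains $\varphi^{-1}\bigl(H(N_f(u))\bigr)$; for generic $u\in C^1$ the inner integral need not stay in $(-a,a)$, so $\mathcal{F}$ is not defined on any full ball of $C^1$ and you cannot invoke Leray--Schauder degree on $B_\rho(0)$ as written. Section~3 of the paper shows exactly how this is handled for the analogous problem: one works not on a ball but on an open set of the form
\[
V=\bigl\{(\lambda,u):\ \|\lambda H(N_f(u)-QN_f(u))\|_\infty<\kappa,\ \|u\|_1<\rho\bigr\},\qquad 2\|c^-\|_{L^1}<\kappa<a,
\]
checks that $\overline{V}$ lies in the domain of the operator, and then uses the generalized homotopy invariance (Lemma~2.2) on the slices $V_\lambda$. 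Your a~priori bound $\|\varphi(u')\|_\infty\le 2\|c^-\|_{L^1}<a$ is precisely what is needed to show fixed points avoid $\partial V$, and $V_0$ is just the ball $B_\rho(0)$, so the $\lambda=0$ computation is unchanged. Insert this device and the rest of your argument goes through.
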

Inspired by those results, we study the problems (\ref{equa1}) by  using  similar  topological methods based upon Leray-Schauder degree. The main contribution of this paper is the extension of some results above cited  to a more general type of boundary conditions.

The  paper is organized as follows. In Section 2, we establish the notation, terminology, and various lemmas which will be  used throughout this paper. Section 3  is devoted to the study of existence of solutions for (\ref{equa1}) with boundary  conditions of type $u(0) = u'(0)= u'(T)$. In Section 4, for  $u(0)=u(T)=u'(T)$ boundary  conditions, we investigate the existence of at least one solution  for (\ref{equa1}). Such problems do not seem to have been studied in the literature. In the present paper generally we follow the ideas of Bereanu and Mawhin \cite{man4, ma}.


\section{Notation and preliminaries}
\label{S:-1}
We first introduce some notation. For fixed $T$, we denote  the usual norm in $L^{1}=L^{1}(\left[0,T\right], \mathbb R)$ for $\left\| \cdot \right\|_{L^{1}}$. Let $C=C(\left[0,T\right], \mathbb R)$ denote the Banach space of   continuous functions from $\left[0, T\right]$ into $\mathbb R$, endowed witch the  uniform norm  $\left\| \cdot \right\|_{\infty}$,  $C^{1}=C^{1}(\left[0,T\right], \mathbb R)$  denote the Banach space of continuously differentiable functions  from $\left[0, T\right]$ into $\mathbb R$, equipped witch the usual norm $\left\|u\right\|_{1}=\left\|u\right\|_{\infty} +  \left\|u' \right\|_{\infty}$, and  $B_{\rho}(0)$  the open ball of center $0$ and radius $\rho$ in any normed space.

We introduce the following applications:
\medskip

\noindent 
the  \textit{Nemytskii operator} $N_f:C^{1} \rightarrow C $, 
\begin{center}
$N_f (u)(t)=f(t,u(t),u'(t))$, 
\end{center}
the  \textit{integration operator}   $H:C \rightarrow C^{1}$, 
\begin{center}
$ H(u)(t)=\int_0^t u(s)ds$, 
\end{center}
\noindent 
the following continuous linear applications:
\begin{center}
$K:C \rightarrow C^{1}, \ \  K(u)(t)=-\int_t^T u(s)ds $,
\end{center}
\begin{center}
$Q:C \rightarrow C, \ \  Q(u)(t)=\frac{1}{T}\int_0^T u(s)ds$,
\end{center}
\begin{center}
$S:C \rightarrow C, \ \  S(u)(t)=u(T)$,
\end{center}
\begin{center}
$P:C \rightarrow C, \ \  P(u)(t)=u(0)$.
\end{center}

For  $u\in C$, we write
\begin{center}
$ u_{m}=\displaystyle  \min_{[0,T]} u,  \  u_{M}= \displaystyle \max_{[0,T]}u,  \   u^{+}=\displaystyle \max\left\{ u,0 \right\},  \  u^{-}=\displaystyle \max \left\{ -u,0 \right\}$.
\end{center}

For the convenience of the reader we recall some results, which will be crucial in the proofs of our results. The following results are taken from \cite{dallos}(see also  \cite{ma1, man6}, respectively). The firs one is needed in the construction of the equivalent fixed point problem.
\begin{lemma}\label{lebema}
Let $B=\left\{h\in C:\left\|h \right\|_{\infty}<a/2\right\}$. For  each $h\in B$,  there exists a unique  $Q_{\varphi}=Q_{\varphi}(h)\in Im(h)$ (where $Im(h)$ denotes the range of  $h$) such that
\begin{center}
$\int_0^T \varphi^{-1}(h(t)-Q_{\varphi}(h))dt = 0$.
\end{center}
Moreover, the function    $Q_{\varphi}:B\rightarrow \mathbb{R}$ is continuous  and sends bounded sets into bounded sets.
\end{lemma}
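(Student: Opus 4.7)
\medskip

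\noindent\textbf{Proof plan.} Since $\varphi:\mathbb{R}\to(-a,a)$ is a homeomorphism with $\varphi(0)=0$, it is strictly monotone; without loss of generality assume $\varphi$ (and hence $\varphi^{-1}$) is strictly increasing. My plan is to reduce the statement to an application of the intermediate value theorem to a one-variable function, followed by a standard subsequence argument for continuity.

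First, for each $h\in B$, denote $h_m=\min_{[0,T]} h$ and $h_M=\max_{[0,T]} h$, so $\mathrm{Im}(h)=[h_m,h_M]\subset(-a/2,a/2)$. Note that for every $Q\in[h_m,h_M]$ and every $t\in[0,T]$ one has $h(t)-Q\in[h_m-h_M,h_M-h_m]\subset(-a,a)$, since $h_M-h_m<a$. Thus $\varphi^{-1}(h(t)-Q)$ is well defined, and we may introduce
$$
G_h:[h_m,h_M]\to\mathbb{R},\qquad G_h(Q)=\int_0^T \varphi^{-1}(h(t)-Q)\,dt.
$$
Continuity of $G_h$ follows from uniform continuity of $\varphi^{-1}$ on compact subsets of $(-a,a)$ together with dominated convergence, and strict monotonicity of $\varphi^{-1}$ makes $G_h$ strictly decreasing in $Q$. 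At the endpoints, $G_h(h_m)=\int_0^T\varphi^{-1}(h(t)-h_m)\,dt\geq 0$ and $G_h(h_M)\leq 0$ because $\varphi^{-1}(0)=0$ and $\varphi^{-1}$ preserves sign. Hence the intermediate value theorem gives a unique $Q_\varphi(h)\in[h_m,h_M]=\mathrm{Im}(h)$ with $G_h(Q_\varphi(h))=0$, which settles existence and uniqueness.

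For the fact that $Q_\varphi$ sends bounded sets into bounded sets, observe that $Q_\varphi(h)\in[h_m,h_M]$ immediately yields $|Q_\varphi(h)|\leq \|h\|_\infty$, so any $L^\infty$-bounded family in $B$ produces a bounded image.

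The continuity of $Q_\varphi:B\to\mathbb{R}$ is the one step that requires a little care, and I regard it as the main obstacle, because one must avoid the blow-up of $\varphi^{-1}$ near $\pm a$. Take $h_n\to h$ in $C$ with $h\in B$. Then $\|h_n\|_\infty\to\|h\|_\infty<a/2$, so for $n$ large one has $\|h_n\|_\infty\leq r$ for some $r<a/2$. Consequently $|Q_\varphi(h_n)|\leq r$, and the sequence $Q_n:=Q_\varphi(h_n)$ is bounded; moreover $|h_n(t)-Q_n|\leq 2r<a$, so the integrands $\varphi^{-1}(h_n(t)-Q_n)$ remain inside a fixed compact subset of $(-a,a)$ on which $\varphi^{-1}$ is uniformly continuous. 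Extracting any convergent subsequence $Q_{n_k}\to Q^\ast$, uniform convergence $h_{n_k}\to h$ together with this uniform continuity lets me pass to the limit in
$$
0=\int_0^T \varphi^{-1}(h_{n_k}(t)-Q_{n_k})\,dt \longrightarrow \int_0^T\varphi^{-1}(h(t)-Q^\ast)\,dt,
$$
and the uniqueness part of the lemma forces $Q^\ast=Q_\varphi(h)$. Since every subsequence has a further subsequence converging to the same limit, the entire sequence $Q_n$ converges to $Q_\varphi(h)$, proving continuity.
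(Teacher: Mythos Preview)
Your proof is correct. The paper itself does not supply a proof of this lemma; it simply quotes the result from \cite{dallos} (with further pointers to \cite{ma1,man6}). What you wrote is essentially the standard argument found in those references: define $G_h(Q)=\int_0^T\varphi^{-1}(h(t)-Q)\,dt$ on $[h_m,h_M]$, use the monotonicity of $\varphi^{-1}$ to get strict monotonicity of $G_h$ together with $G_h(h_m)\geq 0\geq G_h(h_M)$, apply the intermediate value theorem for existence and uniqueness, and then run a compactness/subsequence argument for continuity. Your handling of the potential blow-up of $\varphi^{-1}$ near $\pm a$ via the uniform bound $|h_n(t)-Q_n|\leq 2r<a$ is exactly the point that makes the limit passage legitimate, and the bound $|Q_\varphi(h)|\leq\|h\|_\infty$ immediately gives the boundedness claim. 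There is nothing to add.
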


The second one is an extension of the homotopy invariance property for Leray-schauder degree.
\begin{lemma}\label{propo1}
Let $X$  be a real Banach space, $V\subset [0,1]\times X$ be an open, bounded set and  $M$ be a completely continuous operator  on  $\overline{V}$  such that $x\neq M(\lambda,x)$  for each  $(\lambda,x)\in\partial V$. Then the Leray-Shauder degree
\begin{center}
$ deg_{LS}(I-M(\lambda,.),V_{\lambda},0)$
\end{center}
is well defined  and independent of   $\lambda$  in   $[0,1]$, where  $V_{\lambda}$ is the open, bounded (possibly empty) set defined  by  $V_{\lambda}=\left\{x\in X:(\lambda,x)\in V\right\}$.
\end{lemma}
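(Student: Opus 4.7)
The plan is to reduce the statement to the classical Leray--Schauder homotopy invariance theorem, which applies to cylindrical sets of the form $[0,1]\times U$, by a localization-plus-excision argument. The assertion splits naturally into two parts: (a) that the degree $deg_{LS}(I-M(\lambda,\cdot),V_{\lambda},0)$ is well defined for every fixed $\lambda$, and (b) that its value is independent of $\lambda$.

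For part (a), I would first note that $V_{\lambda}$ is open and bounded in $X$, inherited from $V$ via the embedding $x\mapsto(\lambda,x)$, and that $M(\lambda,\cdot)$ is completely continuous on $\overline{V_{\lambda}}$. The only nontrivial check is that $I-M(\lambda,\cdot)$ has no zero on $\partial V_{\lambda}$. A short topological argument does this: if $x\in\partial V_{\lambda}$, then $x\notin V_{\lambda}$ forces $(\lambda,x)\notin V$, while a sequence $x_{n}\in V_{\lambda}$ converging to $x$ gives points $(\lambda,x_{n})\in V$ converging to $(\lambda,x)$, so $(\lambda,x)\in\overline{V}\setminus V=\partial V$. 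The hypothesis then yields $x\neq M(\lambda,x)$.

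For part (b), I would prove that the map $\lambda\mapsto deg_{LS}(I-M(\lambda,\cdot),V_{\lambda},0)$ is locally constant, hence constant on the connected set $[0,1]$. The central object is the fixed-point set
\[
F=\{(\lambda,x)\in\overline{V}:\, x=M(\lambda,x)\},
\]
which is compact because $M$ is completely continuous and $\overline{V}$ is bounded, and which is contained in $V$ by hypothesis. Since $F$ is compact, $\partial V$ is closed, and $F\cap\partial V=\emptyset$, one has $d(F,\partial V)>0$. Fix $\lambda_{0}\in[0,1]$ and set $F_{\lambda_{0}}=\{x:(\lambda_{0},x)\in F\}$; choose an open bounded $W\subseteq X$ with $F_{\lambda_{0}}\subseteq W$ and $\overline{W}$ inside a small tubular neighborhood of $F_{\lambda_{0}}$ contained in $V_{\lambda_{0}}$. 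Using the joint continuity of $M$ together with $d(F,\partial V)>0$, one picks $\epsilon>0$ so that for every $\lambda\in(\lambda_{0}-\epsilon,\lambda_{0}+\epsilon)\cap[0,1]$ the following hold: $\overline{W}\subseteq V_{\lambda}$, all fixed points of $M(\lambda,\cdot)$ in $V_{\lambda}$ lie in $W$, and $x\neq M(\lambda,x)$ for $x\in\partial W$. Excision then identifies the slice degree with $deg_{LS}(I-M(\lambda,\cdot),W,0)$, and the classical homotopy invariance applied to the cylinder $(\lambda_{0}-\epsilon,\lambda_{0}+\epsilon)\times W$ shows that the latter is constant in $\lambda$.

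The main obstacle is the construction of the common tube $W$: one must ensure simultaneously that $\overline{W}$ remains inside the varying set $V_{\lambda}$ and that it captures every fixed point of $M(\lambda,\cdot)$ for $\lambda$ near $\lambda_{0}$. Both points rely crucially on the openness of $V$ in $[0,1]\times X$ and on the compactness of $F$ coming from the complete continuity of $M$; once they are established the remaining excision and cylindrical homotopy arguments are routine.
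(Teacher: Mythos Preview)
The paper does not actually prove this lemma: it is stated in Section~2 as a preliminary result ``taken from \cite{dallos} (see also \cite{ma1, man6})'' and is used later as a black box, so there is no in-paper proof to compare your attempt against.

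That said, your outline is the standard localization-plus-excision argument for the generalized homotopy invariance (essentially the one found in Mawhin's survey \cite{man6} and in Deimling \cite{man7}), and it is correct in substance. Two small points deserve a bit more care when you write it out in full. First, in part~(a) you should also note that if $V_{\lambda}=\emptyset$ the degree is $0$ by convention, and that $\{\lambda\}\times\overline{V_{\lambda}}\subseteq\overline{V}$ so that $M(\lambda,\cdot)$ is indeed defined on $\overline{V_{\lambda}}$. Second, in part~(b) the assertion ``$\overline{W}\subseteq V_{\lambda}$ for all $\lambda$ near $\lambda_{0}$'' does not follow from a naive tube-lemma argument because $\overline{W}$ need not be compact in infinite dimensions; the clean way to secure it is to use the compactness of $F_{\lambda_{0}}$ to build $W$ as a finite union of small balls $B(x_{i},r_{i}/2)$ chosen so that each product box $(\lambda_{0}-\epsilon_{i},\lambda_{0}+\epsilon_{i})\times B(x_{i},r_{i})$ lies in $V$, and then take $\epsilon=\min_{i}\epsilon_{i}$. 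With that construction all three of your conditions (i)--(iii) follow exactly as you indicate, and the cylindrical homotopy plus excision finish the job.
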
 


\section{Problems with  bounded homeomorphisms}
\label{S:0}

In this section we are interested  in  boundary value problems of the type
\begin{equation}\label{diri1}
\left\{\begin{array}{lll}
(\varphi(u' ))'   = f(t,u,u') & & \\
u(0)=u'(0)=u'(T), 
\end{array}\right.
\end{equation}
where  $\varphi:\mathbb{R} \rightarrow (-a,a)$ is  a homeomorphism, $\varphi(0)=0$ and $f:\left[0, T\right]\times \mathbb{R} \times \mathbb{R}\rightarrow \mathbb{R} $ is a continuous function. In order to apply Leray-Schauder degree theory  to show  the existence of  at least one  solution of (\ref{diri1}), we consider  for  $ \lambda \in [0,1]$, the  family of boundary value problems
\begin{equation}\label{diri2}
\left\{\begin{array}{lll}
(\varphi(u' ))'   = \lambda N_{f}(u)+(1-\lambda)Q(N_{f}(u)) & & \\
u(0)=u'(0)=u'(T). 
\end{array}\right.
\end{equation}
Notice that \ref{diri2} coincide, for $\lambda=1$, with (\ref{diri1}). Now, we introduce the set
\begin{center}
$\Omega = \left\{(\lambda,u)\in[0,1]\times C^{1}:\left\|\lambda H(N_{f}(u) -Q(N_{f}(u)))+ \varphi(P(u)) \right\|_{\infty}<a\right\}$, 
\end{center}
 where clearly $\Omega$ is  an open set in $[0,1]\times C^{1}$, and  is  nonempty because $\left\{0\right\}\times C^{1}\subset\Omega$. Introduce also the  operator  $M:\Omega \rightarrow C^{1}$  defined by 
\begin{equation}\label{diri3}
  M(\lambda,u)=P(u)+Q(N_{f}(u)) + H(\varphi^{-1}\left[\lambda H( N_f (u)-Q(N_{f}(u)))+\varphi(P(u))\right]). 
\end{equation}    
Here  $\varphi^{-1}$  with an abuse of notation is understood as the operator $\varphi^{-1}:B_{a}(0) \subset C \rightarrow C$ defined by $\varphi^{-1}(v)(t)=\varphi^{-1}(v(t))$. The symbol $B_{a}(0)$ denoting the open ball of center $0$ and radius $a$ in $C$. It is clear that $\varphi^{-1}$ is continuous and sends bounded sets into bounded sets.

When the boundary conditions are periodic or Neumann, an operator has been considered by Bereanu and Mawhin \cite{ma}.

The following lemma plays a pivotal role to study the solutions of the  problem (\ref{diri2}).
\begin{lemma}\label{dallos1}
The operator  $M:\Omega \rightarrow C^{1}$ is well  defined  and continuous. Moreover, if  $(\lambda,u)\in \Omega$  is such that  $M(\lambda,u)=u$, then   $u$ is solution of  (\ref{diri2}).
\end{lemma}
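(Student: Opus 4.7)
The plan is to handle the three claims in order: $M$ takes values in $C^{1}$, $M$ is continuous, and every fixed point of $M(\lambda,\cdot)$ solves (\ref{diri2}).

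For well-definedness on $\Omega$, I would first observe that the defining inequality of $\Omega$ is precisely what is needed to ensure that the argument of $\varphi^{-1}$, namely $\lambda H(N_f(u)-Q(N_f(u)))+\varphi(P(u))$, lies in $B_{a}(0)\subset C$, so the composition $\varphi^{-1}(\cdot)$ is a well-defined continuous function on $[0,T]$. Applying $H$ then lands in $C^{1}$, and since $P(u)$ and $Q(N_f(u))$ are constant functions (hence trivially $C^{1}$), the full expression $M(\lambda,u)$ is $C^{1}$. For continuity, each building block is continuous on its domain: $N_f\colon C^{1}\to C$ is continuous because $f$ is continuous; $H$, $Q$, $P$ are continuous linear maps; and $\varphi,\varphi^{-1}$ are continuous, inducing continuous Nemytskii operators on bounded subsets of $C$. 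The continuity of $M$ then follows from composing continuous maps, with the small care that the domain of $\varphi^{-1}$ is only $B_{a}(0)$, which is where $\Omega$ keeps us.

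For the fixed-point step, suppose $(\lambda,u)\in\Omega$ and $M(\lambda,u)=u$. Evaluating the identity at $t=0$ and using $H(v)(0)=0$ gives $u(0)=u(0)+Q(N_f(u))$, hence
\[
Q(N_f(u))=\frac{1}{T}\int_0^{T}f(t,u(t),u'(t))\,dt=0.
\]
Substituting this back, the fixed-point equation becomes
\[
u(t)=u(0)+\int_0^{t}\varphi^{-1}\!\left(\lambda\int_0^{s}f(\tau,u(\tau),u'(\tau))\,d\tau+\varphi(u(0))\right)ds,
\]
so differentiating yields $\varphi(u'(t))=\lambda\int_0^{t}f(\tau,u,u')\,d\tau+\varphi(u(0))$, which is $C^{1}$ in $t$ and whose derivative equals $\lambda N_f(u)(t)=\lambda N_f(u)(t)+(1-\lambda)Q(N_f(u))(t)$. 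This is exactly the ODE in (\ref{diri2}). Setting $t=0$ in the expression for $\varphi(u'(t))$ gives $\varphi(u'(0))=\varphi(u(0))$; setting $t=T$ and using the vanishing integral gives $\varphi(u'(T))=\varphi(u(0))$. Injectivity of $\varphi$ yields $u(0)=u'(0)=u'(T)$, the prescribed three-point condition.

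The only step that requires a little care is the initial identity $Q(N_f(u))=0$ coming from the $t=0$ evaluation. This is what makes the convex-combination right-hand side $\lambda N_f(u)+(1-\lambda)Q(N_f(u))$ collapse to $\lambda N_f(u)$ and simultaneously delivers the boundary identity at $t=T$; without it, a mere fixed point would not recover the ODE nor the three-point condition. Beyond that observation the argument is routine bookkeeping of the definitions of $P$, $Q$, $H$, $N_f$, and $\varphi^{-1}$.
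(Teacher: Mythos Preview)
Your proof is correct and follows essentially the same approach as the paper's: evaluate the fixed-point identity at $t=0$ to obtain $Q(N_f(u))=0$, differentiate to recover $\varphi(u')$, and read off both the ODE and the three-point boundary conditions. The only cosmetic difference is that the paper retains the term $H(N_f(u)-Q(N_f(u)))$ (which automatically vanishes at $t=T$) when checking $u'(T)=u(0)$, whereas you first substitute $Q(N_f(u))=0$ and then invoke the vanishing integral; both routes are equivalent.
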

\begin{proof}
Let $(\lambda,u)\in \Omega$. We show that in fact $M(\lambda,u)\in C^{1}$. The continuity of $M(\lambda,u)$  is a straightforward consequence of the fact that this map is a composition of continuous maps. In addition
\begin{center}
 $\left(M(\lambda,u)\right)^{'}=\varphi^{-1}\left[\lambda H( N_f (u)-Q(N_{f}(u)))+\varphi(P(u))\right]$.
\end{center}
That is, $\left(M(\lambda,u)\right)^{'}$ is a composition of continuous operators and thus $M(\lambda,u)\in C^{1}$. The continuity of $M$  follows by the continuity of the operators which compose it $M$.
 
Now suppose that   $(\lambda,u)\in \Omega$  is such that  $M(\lambda,u)=u$. It follows from (\ref{diri3}) that
\begin{equation}\label{diri4}
 u(t)= u(0)+Q(N_{f}(u))(t) + H(\varphi^{-1}\left[\lambda H( N_f (u)-Q(N_{f}(u)))+\varphi(P(u))\right])(t) 
\end {equation}
for all  $t\in\left[0,T\right]$. Then, taking $t=0$ we get 
\begin{equation}\label{diri5} 
Q(N_{f}(u))=0.
\end {equation}
Differentiating (\ref{diri4}), we obtain that
\begin{align*}
u'(t)&= \varphi^{-1}\left[\lambda H( N_f (u)-Q(N_{f}(u)))+\varphi(P(u))\right](t)\\
&= \varphi^{-1}\left[\lambda H( N_f (u)-Q(N_{f}(u)))(t) +\varphi(u(0)) \right].
\end{align*}
In particular,
\begin{center}
$u(0)=u'(0)=u'(T)$.
\end{center}
Applying  $\varphi$  to both of its members, differentiating again and using (\ref{diri5}), we deduce that
\begin{center}
 $(\varphi(u'(t) ))'= \lambda N_{f}(u)+(1-\lambda)Q(N_{f}(u))(t)$
\end{center}
for all $t\in\left[0, T\right]$. Thus, $u$ satisfies problem (\ref{diri2}). This completes the proof.
\end{proof}
\begin {remark}
 Note that  for $\lambda \in[0,1]$, if $u$ is a solution of (\ref{diri2}), then $Q(N_{f}(u))=0$.
\end {remark}

 The following lemma gives a priori bounds for the possible fixed points of $M$.
\begin{lemma}\label{santos}
 Assume that  $f$ satisfies the following conditions.
\begin{enumerate}
\item  There exists  $M_{1}<M_{2}$  such that for all  $u\in C^{1}$,
\begin{center}
$\int_0^T f(t,u(t),u'(t))dt \neq 0$ \ if \ $u_{m}'\geq M_{2} $,
\end{center}
\begin{center}
$\int_0^T f(t,u(t),u'(t))dt \neq 0$ \ if \ $u_{M}'\leq M_{1} $.
\end{center}
\item There exists $c \in C$ such that 
\begin{center}
$f(t,x,y)\geq c(t)$ \  and \ $L+2\left\|c^{-}\right\|_{L^{1}}< a$
\end{center}
for all $(t,x,y)\in [0, T]\times \mathbb{R}\times \mathbb{R}$ and $L=\max\left\{ \left| \varphi(M_{2})\right|, \left| \varphi(M_{1})\right| \right\}$.
\end{enumerate}

If  $(\lambda,u)\in \Omega $ is such that  $u=M(\lambda,u)$, then
\begin{center}
$ \left\|\lambda H( N_f (u)-Q(N_{f}(u)))+\varphi(P(u))\right\|_{\infty}<  L + 2\left\|c^{-}\right\|_{L^{1}}$      \  and  \   $\left\|u\right\|_{1}< r(2+T)$,
\end{center}
where $$r=\max\left\{\left|\varphi^{-1}(L+2\left\|c^{-}\right\|_{L^{1}})\right|, \left|\varphi^{-1}(-L-2\left\|c^{-}\right\|_{L^{1}})\right|\right\}$$.
\end{lemma}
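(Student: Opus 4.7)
The plan is to first use Lemma~\ref{dallos1} to convert the fixed-point identity $u = M(\lambda,u)$ into usable pointwise information about $u$, and then to estimate the quantity
\begin{equation*}
v(t) := \lambda H(N_f(u) - Q(N_f(u)))(t) + \varphi(P(u))(t)
\end{equation*}
by combining the contrapositive of hypothesis (1) with the $L^1$ control supplied by hypothesis (2). Lemma~\ref{dallos1} together with the remark following it gives $Q(N_f(u)) = 0$, i.e.\ $\int_0^T f(t,u(t),u'(t))\,dt = 0$, and the formula for $(M(\lambda,u))'$ recorded in its proof yields $\varphi(u'(t)) = \lambda\int_0^t f(s,u,u')\,ds + \varphi(u(0))$, which after using $Q(N_f(u)) = 0$ and $P(u) = u(0)$ is exactly $v(t)$. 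Thus bounding $\|v\|_\infty$ is equivalent to bounding $\|\varphi(u')\|_\infty$.

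For the first estimate I would apply hypothesis (1) contrapositively: since $\int_0^T f(\cdot,u,u')\,dt = 0$, we must have $u'_m < M_2$ and $u'_M > M_1$, so there exist $t_1,t_2\in[0,T]$ with $u'(t_1)<M_2$ and $u'(t_2)>M_1$. Because $\varphi:\mathbb{R}\to(-a,a)$ is a homeomorphism with $\varphi(0)=0$, it is strictly monotone, and therefore $v(t_1)=\varphi(u'(t_1))<\varphi(M_2)\le L$ and $v(t_2)>\varphi(M_1)\ge -L$. Hypothesis (2) then controls oscillation: $f\ge c$ gives $f^-\le c^-$ pointwise, so
\begin{equation*}
\int_0^T |f(s,u,u')|\,ds = \int_0^T f\,ds + 2\int_0^T f^-\,ds \le 2\|c^-\|_{L^1}.
\end{equation*}
For every $t\in[0,T]$, writing $v(t)=v(t_1)+\lambda\int_{t_1}^t f\,ds$ yields $v(t)<L+2\|c^-\|_{L^1}$, and writing $v(t)=v(t_2)+\lambda\int_{t_2}^t f\,ds$ yields $v(t)>-L-2\|c^-\|_{L^1}$. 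Continuity of $v$ on the compact interval $[0,T]$ upgrades these strict pointwise estimates to $\|v\|_\infty<L+2\|c^-\|_{L^1}$, the first asserted bound.

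For the $C^1$ bound, hypothesis (2) also guarantees $L+2\|c^-\|_{L^1}<a$, so $v(t)$ stays in the domain of $\varphi^{-1}$ and, by monotonicity of $\varphi^{-1}$ together with the definition of $r$, we obtain $\|u'\|_\infty=\|\varphi^{-1}(v)\|_\infty<r$. The boundary condition $u(0)=u'(0)$ gives $|u(0)|\le\|u'\|_\infty<r$, whence $\|u\|_\infty\le |u(0)|+T\|u'\|_\infty<(1+T)r$ and finally $\|u\|_1<(2+T)r$. The one piece I expect to require care is the contrapositive use of hypothesis (1) to locate $t_1$ and $t_2$ and the bookkeeping needed to carry the strict inequalities through the integration step and then through $\varphi^{-1}$; once those are in hand the remainder is routine estimation.
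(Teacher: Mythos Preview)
Your argument is correct and follows essentially the same route as the paper: reduce via Lemma~\ref{dallos1} to $Q(N_f(u))=0$ and $\varphi(u')=v$, use hypothesis~(1) contrapositively to locate points where $u'$ falls in the right range, combine $|f|=f+2f^{-}$ with $\int_0^T f=0$ and $f^{-}\le c^{-}$ to bound $\int_0^T|f|$, and finish with the boundary condition $u(0)=u'(0)$. The only cosmetic difference is that the paper invokes the intermediate value theorem to find a \emph{single} point $\omega$ with $M_1<u'(\omega)<M_2$ (so $|\varphi(u'(\omega))|<L$) and estimates $|\varphi(u'(t))|$ from that one anchor, whereas you use two anchors $t_1,t_2$ to control the upper and lower bounds separately; your implicit assumption that $\varphi$ is increasing when writing $\varphi(u'(t_1))<\varphi(M_2)$ is harmless since the decreasing case is symmetric.
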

\begin{proof}
Let   $(\lambda, u) \in \Omega$  be such that $u=M(\lambda,u)$. Using Lemma \ref{dallos1}, $u$ is a solution of  problem (\ref{diri2}), then 
\begin{equation}\label{ya1}
Q(N_{f}(u))=0,
\end{equation}
and thus $u$ solves the problem 
\begin{center}
$(\varphi(u' ))' = \lambda N_{f}(u),  \ \  u(0)=u'(0)=u'(T)$. 
\end{center}
Hence
\begin{equation}\label{ya2}
\varphi(u'(t))=\lambda H(N_{f}(u)-Q(N_{f}(u)))(t) + \varphi(u(0))   \ \ \  \left(t\in\left[0, T\right]\right).
\end{equation}
On the other hand using hypothesis 1, we have that
\begin{center}
$u_{m}'<M_{2}$  \   and   \  $u_{M}'> M_{1}$.
\end{center}
It follows that there exists  $\omega \in [0,T]$  such that  $M_{1}<u'(\omega)<M_{2}$  \  and  
\begin{center}
$\int_\omega ^t (\varphi(u'(s)))'ds= \lambda \int_\omega ^t  N_{f}(u)(s)ds$,
\end{center}
which implies that
\begin{center}
$\left| \varphi(u'(t))\right| \leq \left| \varphi(u'(\omega))\right|+  \int_0 ^T \left|f(s,u(s),u'(s))\right|ds$, 
\end{center}
where
\begin{center}
$\int_0 ^T \left|f(s,u(s),u'(s))\right|ds\leq \int_0 ^T f(s,u(s),u'(s))ds +2\int_0 ^T c^{-}(s)ds$.
\end{center}
Hence
\begin{center}
$\left| \varphi(u'(t))\right| < L + 2\left\|c^{-}\right\|_{L^{1}}$,
\end{center}
where  $L= \max \left\{ \left| \varphi(M_{2})\right|, \left| \varphi(M_{1})\right| \right\}$ \  and  \ $t\in [0,T]$. Using the equality above (\ref{ya2}), we have 
\begin{center}
$\left\|\lambda H(N_{f}(u)-Q(N_{f}(u))) + \varphi(P(u)) \right\|_{\infty} < L + 2\left\|c^{-}\right\|_{L^{1}}<a$,
\end{center}
which provides  
\begin{center}
$\left\|u'\right\|_{\infty}< r$,
\end{center}
where  $r= \max\left\{\left|\varphi^{-1}(L+2\left\|c^{-}\right\|_{L^{1}})\right|,\ \left|\varphi^{-1}(-L-2\left\|c^{-}\right\|_{L^{1}})\right|\right\}$.
Because  $u\in C^{1}$ is such that  $u'(0)=u(0)$ we have that
\begin{center}
$\left|u(t)\right|\leq \left|u(0)\right|+ \int_ 0 ^T  |u'(s)| ds < r +rT   \  \  (t\in[0,T] )$.
\end{center}
 So, we obtain that  $\left\|u\right\|_{1} =\left\|u\right\|_{\infty}+ \left\|u'\right\|_{\infty}< r+rT +r=r(2+T)$. This completes the proof of Lemma \ref{santos}.
\end{proof}

Let $\rho, \  \kappa \in \mathbb{R}$ be such that  $L+2\left\|c^{-}\right\|_{L^{1}} <\kappa<a,  \  \rho>r(2+T)$ and consider the set 
\begin{center}
$V=\left\{(\lambda,u)\in[0,1]\times C^{1}:\left\|\lambda H(N_{f}(u)-Q(N_{f}(u))) + \varphi(P(u)) \right\|_{\infty}<\kappa, \  \left\|u\right\|_{1}<\rho\right\}$.
\end{center}
Since  the set  $\left\{0\right\}\times\left\{u\in C^{1}:\left\|u\right\|_{1}<\rho, \   \left\| \varphi(P(u))\right\|_{\infty}< \kappa \right\}\subset V$, then we deduce that $V$ is nonempty. Moreover, it is clear that $V$ is open and bounded in  $ [0,1]\times C^{1}$  and  $\overline{V}\subset\Omega$. On the other hand using an argument similar to the one introduced in the proof of Lemma \ref{dallos1}, it is not difficult to see that $M:\overline{V}\rightarrow C^{1}$  is well defined and continuous. Furthermore, using Lemma \ref{santos}, we have that
\begin{center}
$ u\neq M(\lambda,u)$  \  for all  \ $(\lambda,u) \in\partial V$.
\end{center}
\begin{lemma}\label{santos11}
The operator $M:\overline{V}\rightarrow C^{1}$ is completely continuous.
\end{lemma}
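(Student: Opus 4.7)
Continuity of $M$ on $\overline{V}$ follows from the same composition-of-continuous-maps argument used in Lemma \ref{dallos1}, so the task reduces to showing that $M$ sends the bounded set $\overline{V}$ into a relatively compact subset of $C^{1}$. My plan is to apply Arzel\`a-Ascoli in $C^{1}$: I would verify that $M(\overline{V})$ is uniformly bounded in $\|\cdot\|_{1}$ and that both $\{M(\lambda,u)\}$ and $\{(M(\lambda,u))'\}$ are equicontinuous families in $C$.

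For the uniform bound, the inequality $\|u\|_{1}\le\rho$ valid on $\overline{V}$, together with continuity of $f$ on the compact set $[0,T]\times[-\rho,\rho]^{2}$, yields a constant $C_{0}$ with $\|N_{f}(u)\|_{\infty}\le C_{0}$ for all $(\lambda,u)\in\overline{V}$. The defining inequality of $V$ confines the argument of $\varphi^{-1}$ to the fixed compact interval $[-\kappa,\kappa]\subset(-a,a)$, so $(M(\lambda,u))'=\varphi^{-1}[\,\cdot\,]$ is uniformly bounded by $C_{1}:=\max\{|\varphi^{-1}(\kappa)|,|\varphi^{-1}(-\kappa)|\}$. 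Summing the three pieces of $M$ then produces a bound $\|M(\lambda,u)\|_{1}\le\rho+C_{0}+(T+1)C_{1}$. Equicontinuity of $\{M(\lambda,u)\}$ itself is automatic, since each $M(\lambda,u)$ is $C_{1}$-Lipschitz by the fundamental theorem of calculus.

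The delicate step is equicontinuity of the derivatives $g(\lambda,u):=\varphi^{-1}[A(\lambda,u)]$, where $A(\lambda,u)=\lambda H(N_{f}(u)-Q(N_{f}(u)))+\varphi(P(u))$. Since the constant term $\varphi(P(u))$ does not affect the modulus of continuity, a direct estimate gives $|A(\lambda,u)(t_{1})-A(\lambda,u)(t_{2})|\le 2C_{0}|t_{1}-t_{2}|$, so the family $\{A(\lambda,u):(\lambda,u)\in\overline{V}\}$ is equi-Lipschitz in $C$. Because these values lie in the compact set $[-\kappa,\kappa]$ on which $\varphi^{-1}$ is uniformly continuous, composing with $\varphi^{-1}$ preserves equicontinuity, and hence $\{g(\lambda,u)\}$ is an equicontinuous family in $C$.

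The main obstacle is precisely this last step: the reason one requires the strict bound $\kappa<a$ in the definition of $V$ (rather than $\kappa=a$) is that $\varphi^{-1}$ need not be uniformly continuous on the whole of $(-a,a)$, so without such confinement equicontinuity of the derivative would be out of reach. Once the equi-Lipschitz behaviour of $A(\lambda,u)$ is combined with uniform continuity of $\varphi^{-1}$ on $[-\kappa,\kappa]$, Arzel\`a-Ascoli delivers relative compactness of $M(\overline{V})$ in $C^{1}$, and complete continuity of $M$ follows.
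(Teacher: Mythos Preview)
Your argument is correct and follows essentially the same route as the paper's proof: both use the uniform bound $\|N_{f}(u)\|_{\infty}\le C_{0}$ coming from $\|u\|_{1}\le\rho$, the equi-Lipschitz estimate $|A(\lambda,u)(t_{1})-A(\lambda,u)(t_{2})|\le 2C_{0}|t_{1}-t_{2}|$, and the confinement $\|A(\lambda,u)\|_{\infty}\le\kappa<a$ to feed Arzel\`a--Ascoli. The only organizational difference is that the paper applies Arzel\`a--Ascoli to the inner family $\{A(\lambda,u)\}$ in $C$ first and then uses continuity of $\varphi^{-1}:B_{a}(0)\subset C\to C$ to pass the convergence to the derivatives, whereas you obtain equicontinuity of $\{(M(\lambda,u))'\}$ directly via uniform continuity of $\varphi^{-1}$ on the compact interval $[-\kappa,\kappa]$.
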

\begin{proof}
Let $\Lambda \subset \overline{V}$ be a bounded set. Then, if $(\lambda, u)\in\Lambda$, there exists a constant $\delta>0$ such that  
\begin{equation}\label{delta}
\left\|(\lambda, u)\right\|=  \max\left\{\left\|\lambda \right\|, \  \left\|u\right\|_{1}\right\}\leq \delta.
\end{equation}
Let us show that  $\overline{M(\Lambda)}\subset  C^{1}$ is compact. To see this consider first a sequence $(v_{n})_{n}$ of  $M(\Lambda)$  and let 
 $(\lambda_{n}, u_{n})_{n}$  be a sequence in $\Lambda$  such that $v_{n}=M(\lambda_{n}, u_{n})$. Using (\ref{delta}), we have that there exists a constant $L_{1}>0$ such that, for all $n\in \mathbb{N}$,
\begin{center}
$\left\|N_{f}(u_{n})\right\|_{\infty}\leq L_{1}$.
\end{center}
 
 Because  $\left\|\lambda_{n}H(N_{f}(u_{n})-Q(N_{f}(u_{n}))) + \varphi(P(u_{n}))\right\|_{\infty} \leq \kappa <a$  for all  $n\in \mathbb{N}$, it follows that the sequence  $(\lambda_{n}H(N_{f}(u_{n})-Q(N_{f}(u_{n}))) + \varphi(P(u_{n})))_{n}$ is bounded in $C$. Moreover, for  any $t, t_{1}\in\left[0, T\right]$ and for all $n\in \mathbb{N}$ we have
\begin{align*}
&\left|\lambda_{n}H(N_{f}(u_{n})-Q(N_{f}(u_{n})))(t) + \varphi(u_{n}(0)) - \lambda_{n}H(N_{f}(u_{n})-Q(N_{f}(u_{n})))(t_{1}) - \varphi(u_{n}(0)) \right|\\
&\leq \left|\int_{t_1}^t f(s,u_{n}(s),u_{n}'(s))ds - \int_{t_1}^t Q(N_{f}(u_{n}))(s) ds\right|\\
&\leq \left|t-t_1\right|\left\| N_f(u_{n})\right\|_{\infty} +  \left|t-t_1\right|\left\| Q(N_f(u_{n}))\right\|_{\infty}\\
&\leq L_{1}\left|t-t_1\right| +  L_{1} \left|t-t_1\right|\\
&\leq 2L_{1}\left|t-t_1\right|,
\end{align*} 
which implies that   $(\lambda_{n}H(N_{f}(u_{n})-Q(N_{f}(u_{n}))) + \varphi(P(u_{n})))_{n}$ is equicontinuous. Thus, by the Arzel\`a-Ascoli  theorem  there is a subsequence of $(\lambda_{n}H(N_{f}(u_{n})-Q(N_{f}(u_{n}))) + \varphi(P(u_{n})))_{n}$, which we call  $(\lambda_{n}H(N_{f}(u_{j})-Q(N_{f}(u_{j}))) + \varphi(P(u_{j})))_{j}$, which is  convergent in $C$. Using  that  $\varphi^{-1}: B_{a}(0)\subset C \rightarrow  C$ is continuous it follows from   
\begin{center}
$(M(\lambda_{n_{j}},u_{n_{j}}))'=\varphi^{-1} \left[\lambda_{n}H(N_{f}(u_{j})-Q(N_{f}(u_{j}))) + \varphi(P(u_{j})) \right] $
\end{center}

 that the sequence    $((M(\lambda_{n_{j}},u_{n_{j}}))')_{j}$  is  convergent in $C$. Then, passing to a subsequence if necessary, we obtain 
 that   $(v_{n_{j}})_{j}= (M(\lambda_{n_{j}},u_{n_{j}}))_{j}$ is  convergent in  $C^{1}$. Finally, let  $(v_{n})_{n}$  be a sequence in  $\overline{M(\Lambda)}$. Let  $(z_{n})_{n}\subseteq M(\Lambda)$  be such that
\[
\lim_{n \to \infty}\left\| z_{n}-v_{n}\right\|_{1}=0.
\]
Let  in addition $(z_{n_{j}})_{j}$ be a subsequence of  $(z_{n})_{n}$   that  converges to $z$. Therefore, $z\in \overline{M(\Lambda)}$ and  $(v_{n_{j}})_{j}$ converge  to $z$. This concludes the proof.
\end{proof}


\subsection{Main result}
\label{S:0}

In this subsection, we present and prove an existence theorem for (\ref{diri1}). We denote by $deg_{B}$ the Brouwer degree and  for $deg_{LS}$ the 
Leray-Schauder degree, and define the mapping $G:\mathbb{R}^{2}\rightarrow \mathbb{R}^{2}$ by 
\begin{center}
$G:\mathbb{R}^{2}\rightarrow \mathbb{R}^{2},  \   (x,y)\mapsto (-\frac{1}{T}\int_0^T f(t,x+yt,y)dt, y-x)$.
\end{center} 

\begin{theorem}\label{prin1}
Let $f:\left[0,T\right]\times \mathbb{R}\times \mathbb{R}\rightarrow \mathbb{R}$ be continuous and satisfy condition (1) and (2) of  Lemma \ref{santos}. Then, for all $\rho>r(2+T)$ and  for all $\kappa\in \mathbb{R}$ with  $L+2\left\|c^{-}\right\|_{L^{1}} <\kappa<a$, we have 
\begin{center}
$deg_{LS}(I-M(1,\cdot), V_{1},0) = deg_{B}(G, \Delta, 0)$,
\end{center}
where $V_{1}$ is the open, bounded set defined by $V_{1}=\left\{u\in C^{1}:(1,u)\in V\right\}$ and $\Delta=B_{\rho}(0) \cap  \mathbb{R}^{2} \cap \left\{(x,y)\in \mathbb{R}^{2}:\left|\varphi(x)\right|<\kappa \right\}$. If furthermore
\begin{center}
$deg_{B}(G, \Delta, 0)\neq 0$,
\end{center}
problem (\ref{diri1}) has at least one solution.
\end{theorem}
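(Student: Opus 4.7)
The plan is to apply the homotopy invariance of the Leray-Schauder degree (Lemma~\ref{propo1}) to the family $M:\overline{V}\to C^{1}$, and then to identify the degree at $\lambda=0$ with a Brouwer degree on a two-dimensional subspace of $C^{1}$. All the non-trivial analytic work has already been done: Lemma~\ref{dallos1} gives the equivalence between fixed points of $M(\lambda,\cdot)$ and solutions of (\ref{diri2}), Lemma~\ref{santos} provides the a priori bounds on those fixed points, and Lemma~\ref{santos11} provides complete continuity of $M$ on $\overline{V}$.

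First I would verify that $u\neq M(\lambda,u)$ for every $(\lambda,u)\in\partial V$. Suppose $u=M(\lambda,u)$ for some $(\lambda,u)\in\partial V$; then Lemma~\ref{santos} gives $\|\lambda H(N_{f}(u)-Q(N_{f}(u)))+\varphi(P(u))\|_{\infty}<L+2\|c^{-}\|_{L^{1}}<\kappa$ and $\|u\|_{1}<r(2+T)<\rho$, so $(\lambda,u)$ actually lies in the interior of $V$, a contradiction. Lemma~\ref{propo1} then yields
\begin{equation*}
deg_{LS}(I-M(1,\cdot),V_{1},0)=deg_{LS}(I-M(0,\cdot),V_{0},0).
\end{equation*}

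The crucial observation is that at $\lambda=0$ the operator $M(0,\cdot)$ takes values in the two-dimensional subspace $E_{0}\subset C^{1}$ of affine functions $t\mapsto x+yt$. Indeed, since $P(u)\equiv u(0)$ is a constant function and $\varphi^{-1}\circ\varphi=\mathrm{id}$, formula (\ref{diri3}) collapses to
\begin{equation*}
M(0,u)(t)=u(0)+\frac{1}{T}\int_{0}^{T}f(s,u(s),u'(s))\,ds+u(0)\,t.
\end{equation*}
By the standard reduction of Leray-Schauder degree to a finite-dimensional subspace, it follows that
\begin{equation*}
deg_{LS}(I-M(0,\cdot),V_{0},0)=deg_{B}\bigl((I-M(0,\cdot))|_{V_{0}\cap E_{0}},V_{0}\cap E_{0},0\bigr).
\end{equation*}
Identifying $E_{0}$ with $\mathbb{R}^{2}$ through $(x,y)\leftrightarrow x+yt$ and substituting $u(0)=x$, $u'\equiv y$ into the displayed formula for $M(0,u)$, the restricted operator $I-M(0,\cdot)|_{E_{0}}$ becomes exactly the mapping $G$ from the statement. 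Simultaneously, the constraint $\|\varphi(P(u))\|_{\infty}<\kappa$ becomes $|\varphi(x)|<\kappa$, while $\|u\|_{1}<\rho$ is precisely the condition cutting out $B_{\rho}(0)\cap\mathbb{R}^{2}$; hence $V_{0}\cap E_{0}$ corresponds to $\Delta$. Putting the two degree equalities together gives the claimed identity, and whenever $deg_{B}(G,\Delta,0)\neq 0$ the solution property of the Leray-Schauder degree furnishes a fixed point of $M(1,\cdot)$ in $V_{1}$, which by Lemma~\ref{dallos1} is a solution of (\ref{diri1}).

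The delicate step I expect is the finite-dimensional reduction: one must verify that the range of $M(0,\cdot)$ over $\overline{V_{0}}$ really lies in $E_{0}$ (immediate from the simplification above) and that the identification $(x,y)\leftrightarrow x+yt$ genuinely turns the restricted operator into $G$ and the slice $V_{0}\cap E_{0}$ into $\Delta$. Neither computation is deep, but this is the point where the abstract homotopy argument couples to the explicit Brouwer degree appearing in the statement.
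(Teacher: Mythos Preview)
Your argument is correct and follows essentially the same route as the paper: homotopy invariance via Lemma~\ref{propo1}, the simplification $M(0,\cdot)=P+QN_{f}+HP$ with range in the affine functions, and the finite-dimensional reduction identifying $(I-M(0,\cdot))|_{E_{0}}$ with $G$ on $\Delta$. You have in fact supplied more detail than the paper does, in particular the explicit verification that fixed points avoid $\partial V$ and the coordinate computation showing the restricted map is exactly $G$.
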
 

\begin{proof} 
  Let $M$ be the operator given by (\ref{diri3}). Using Lemma \ref{propo1}, we deduce that
\begin{center}
$deg_{LS}(I-M(0,.),V_{0},0)=deg_{LS}(I-M(1,.),V_{1},0)$, 
\end{center}
On the other hand, we have that
\begin{center}
$deg_{LS}(I-M(0,.),V_{0},0) = deg_{LS}(I-(P+QN_{f}+HP) ,V_{0},0)$.
\end{center}
But the range of the mapping
\begin{center}
$u \mapsto P(u)+Q(N_{f}(u))+H(P(u))$
\end{center}
is contained in the subspace  of related functions, isomorphic to $\mathbb{R}^{2}$. Using  homotopy invariance and reduction properties  of  Leray-Schauder degree \cite{man7}, we obtain
\begin{align*}
& deg_{LS}(I-(P+QN_{f}+HP) ,V_{0},0)\\
& = deg_{B}\left(I-(P+QN_{f}+HP)\left|_{\overline{\Delta}}\right. ,\Delta, 0\right)\\
&=deg_{B}(G,\Delta,0)\neq 0.
\end{align*}
Then, $deg_{LS}(I-M(1,.),V_{1},0) \neq 0$. This implies that, there exists $u \in V_{1}$ such that $ M(1,u)=u$, which is a solution for (\ref{diri1}). 
\end{proof}                                                                                         
\begin{remark}\label{obs}
Using the family of boundary value problems 
\begin{equation}\label{equa9}
\left\{\begin{array}{lll}
(\varphi(u' ))'   = \lambda N_{f}(u)+(1-\lambda)Q(N_{f}(u)) & & \\
u(T) = u'(0) = u'(T) 
\end{array}\right.
\end{equation} 
which gives the completely continuous homotopy $\widetilde{M}$ defined  by 
\begin{center}
 $\widetilde{M}(\lambda,u)=  S(u)+Q(N_{f}(u)) + K(\varphi^{-1}\left[\lambda H( N_f (u)-Q(N_{f}(u)))+\varphi(S(u))\right])$,
\end{center}
and similar a priori bounds as in the Lemma \ref{santos}, it is not difficult to see  that (\ref{equa9}) has a solution for $\lambda=1$.
\end{remark}

Let us give now an application of Theorem \ref{prin1}.
\begin{example}
Consider the boundary value problem
\begin{equation}\label{exemplo1.2}
\left\{\begin{array}{lll}
\left(\frac{u'}{\sqrt{1+u'^{2}}} \right)' = e^{4u'}-e & & \\
 u(0)=u'(0)=u'(T). 
\end{array}\right.
\end{equation}

Let $u\in C^{1}$, $M_{1}=0$ \ and    $M_{2}=\frac{1}{2}$. If we suppose that $u'_{m}\geq M_{2}$ and $u'_{M}\leq M_{1}$, then
\begin{center}
$\int_0 ^T (e^{4u'(t)}-e)dt\geq (e^{2}-e )T>0, \ \ \ \ \  \int_0 ^T (e^{4u'(t)}-e)dt\leq (1-e)T<0$.
\end{center}

Let $c(t)=-3$ for all $t\in \left[0, T\right]$, and let $L=\frac{1}{\sqrt{5}}$. If  $L+6T< \kappa=0,9<1$ and  $\rho > r(2+T)=\frac{L+6T}{\sqrt{1-(L+6T)^{2}}}(2+T)$, then the equation 
\begin{align*}
G(x,y)&=\left(-\frac{1}{T}\int_{0}^{T} f(t,x+yt,y)dt, y-x \right)=(0,0)\\
&= \left(-\frac{1}{T}\int_{0}^{T}(e^{4y}-e)dt, y-x \right)=(0,0)\\ 
&= \left(-e^{4y}+e, y-x \right)=(0,0)
\end{align*} 
has no solution on  $\partial \Delta$, and hence  the Brouwer degree $deg_{B}(G,\Delta,(0,0))$ is well  defined. So, using the properties of  the Brouwer degree, we have  
\begin{center}
 $deg_{B}(G, \Delta, (0,0))= \displaystyle\sum_{(x,y) \in G^{-1}(0,0)}$sgn$J_{G}(x,y)\neq 0$,
\end{center}
where $(0,0)$ is a regular value of $G$ and $J_{G}(x,y)=$det$ G'(x,y)$ is the Jacobian of $G$ at $(x,y)$. Therefore, the problem (\ref{exemplo1.2}) has at least one solution.  
\end{example} 
\section{Existence results for problems with bounded homeomorphisms}
\label{S:2}
In this section we  study the existence of at least one solution for nonlinear problems of the form
\begin{equation}\label{misto1}
\left\{\begin{array}{lll}
(\varphi(u' ))'   = f(t,u,u') & & \\
u(T)=u(0)=u'(T), 
\end{array}\right.
\end{equation}
where  $\varphi: \mathbb{R}  \rightarrow (-a,a) $ is  a homeomorphism, $\varphi(0)=0$ and $f:\left[0, T\right]\times \mathbb{R} \times \mathbb{R}\rightarrow \mathbb{R} $ is a continuous function such that
\begin{equation}\label{colo}
\left|f(t,x,y)\right| \leq c <\frac{a}{2T} \ \ \  for \ \  all  \ \ (t,x,y)  \in \left[0, T\right]\times \mathbb{R} \times \mathbb{R}.
\end{equation}
Now, using  Lemma \ref{lebema} and (\ref{colo}) we introduce the operator $M_{1}:C^{1}\rightarrow C^{1}$ defined by
\begin{center}
$M_{1}(u)=\varphi^{-1}(-Q_{\varphi}(K(N_{f}(u)))) + H\left(\varphi^{-1} \left[K(N_{f}(u))-Q_{\varphi}(K(N_{f}(u)))\right]\right)$.
\end{center}
The following results are taken from \cite{dallos}.
\begin{lemma}\label{dallos11}
If  $u \in C^{1}$ is such that $u= M_{1}(u)$, then $u$ is a solution of  (\ref{misto1}). 
\end{lemma}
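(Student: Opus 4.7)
The plan is to unpack the fixed point identity $u = M_1(u)$ pointwise and read off both boundary conditions and the differential equation directly, so the proof is essentially computational. Note that $Q_\varphi(K(N_f(u)))$ is a scalar, so $\alpha := \varphi^{-1}(-Q_\varphi(K(N_f(u))))$ is a constant and the fixed point identity reads
\begin{equation*}
u(t) \;=\; \alpha \;+\; \int_0^t \varphi^{-1}\bigl[K(N_f(u))(s) - Q_\varphi(K(N_f(u)))\bigr]\,ds, \qquad t\in[0,T].
\end{equation*}
Before doing anything I would check that $Q_\varphi$ may legitimately be applied to $K(N_f(u))$: the hypothesis (\ref{colo}) gives $\|K(N_f(u))\|_\infty \le cT < a/2$, so $K(N_f(u))\in B$ and Lemma \ref{lebema} applies.

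First I would evaluate at $t=0$, obtaining $u(0) = \alpha$, i.e. $\varphi(u(0)) = -Q_\varphi(K(N_f(u)))$. Next I would differentiate to get
\begin{equation*}
u'(t) \;=\; \varphi^{-1}\bigl[K(N_f(u))(t) - Q_\varphi(K(N_f(u)))\bigr],
\end{equation*}
and evaluate at $t=T$; since $K(N_f(u))(T) = -\int_T^T N_f(u)\,ds = 0$, this yields $u'(T) = \varphi^{-1}(-Q_\varphi(K(N_f(u)))) = \alpha = u(0)$. For the remaining boundary relation $u(T) = u(0)$, I would observe
\begin{equation*}
u(T) - u(0) \;=\; \int_0^T \varphi^{-1}\bigl[K(N_f(u))(s) - Q_\varphi(K(N_f(u)))\bigr]\,ds \;=\; 0,
\end{equation*}
which is exactly the defining property of $Q_\varphi$ from Lemma \ref{lebema}. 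This gives the full three-point condition $u(T) = u(0) = u'(T)$.

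Finally, to recover the ODE I would apply $\varphi$ to the derivative formula, obtaining $\varphi(u'(t)) = K(N_f(u))(t) - Q_\varphi(K(N_f(u)))$; the constant $Q_\varphi(\cdot)$ term drops out upon differentiation, and $(K(N_f(u)))'(t) = N_f(u)(t)$ by the fundamental theorem of calculus, so $(\varphi(u'(t)))' = f(t,u(t),u'(t))$ for every $t\in[0,T]$.

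There is no real obstacle here: the only subtlety is confirming that $K(N_f(u))$ lies in the domain $B$ of $Q_\varphi$, which is immediate from the sup-bound (\ref{colo}) on $f$. All other steps are algebraic manipulations with $H$, $K$, $\varphi$, $\varphi^{-1}$ and evaluation at the endpoints $0$ and $T$.
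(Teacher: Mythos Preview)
Your proof is correct. The paper itself does not give a proof of this particular lemma (it is quoted from \cite{dallos}), but your argument matches step for step the proof the paper \emph{does} provide for the $\lambda$-parametrized analogue (the lemma about $u=M(\lambda,u)$ solving (\ref{misto2})), specialized to $\lambda=1$; your extra verification that $K(N_f(u))\in B$ before invoking $Q_\varphi$ is a welcome point of rigor that the paper leaves implicit.
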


\begin{lemma}\label{parecido}
The operator   $M_{1}:C^{1} \rightarrow C^{1}$ is completely continuous.
\end{lemma}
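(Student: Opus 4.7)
The plan is to verify directly that $M_1$ is both continuous and sends bounded sets into relatively compact subsets of $C^1$. The key quantitative input is $\|f\|_\infty \le c < a/(2T)$, which makes everything lie safely inside the natural domains of $Q_\varphi$ and $\varphi^{-1}$ with uniform slack.

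First I would check that $M_1$ is well defined on all of $C^1$. Since $\|N_f(u)\|_\infty \le c$, we have $\|K(N_f(u))\|_\infty \le cT < a/2$, so $K(N_f(u))$ lies in the set $B$ of Lemma \ref{lebema} and $Q_\varphi(K(N_f(u)))$ is defined. Because $Q_\varphi(h)\in\mathrm{Im}(h)$, we also get $|Q_\varphi(K(N_f(u)))|\le \|K(N_f(u))\|_\infty < a/2$, so $\|K(N_f(u))-Q_\varphi(K(N_f(u)))\|_\infty < a$, which puts this function in $B_a(0)\subset C$ where $\varphi^{-1}$ acts. Continuity of $M_1$ then follows because it is a composition of continuous operators ($N_f$, $K$, $H$, $\varphi^{-1}$ acting on $C$, and the scalar map $Q_\varphi$ whose continuity is given in Lemma \ref{lebema}).

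For compactness, let $\Lambda\subset C^1$ be bounded and take a sequence $(u_n)\subset\Lambda$. Set $\beta_n := K(N_f(u_n)) - Q_\varphi(K(N_f(u_n)))$ and $\alpha_n := -Q_\varphi(K(N_f(u_n)))$, so that $M_1(u_n)(t) = \varphi^{-1}(\alpha_n) + H(\varphi^{-1}(\beta_n))(t)$ and $(M_1(u_n))'(t) = \varphi^{-1}(\beta_n(t))$. From the bound $|f|\le c$ and the formula for $K$, the sequence $(K(N_f(u_n)))$ is uniformly bounded by $cT$ and Lipschitz with constant $c$, hence equicontinuous. Subtracting the bounded scalars $Q_\varphi(K(N_f(u_n)))$ preserves equicontinuity, so $(\beta_n)$ is bounded and equicontinuous in $C$, with values contained in the compact interval $[-2cT,\,2cT]\subset(-a,a)$. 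By Arzelà–Ascoli a subsequence $(\beta_{n_j})$ converges in $C$. Since $\varphi^{-1}$ is uniformly continuous on $[-2cT, 2cT]$, the sequence $(\varphi^{-1}(\beta_{n_j}))=((M_1(u_{n_j}))')$ converges in $C$. The scalars $(\alpha_{n_j})$ are bounded in $[-a/2,a/2]$, so after passing to a further subsequence $(\varphi^{-1}(\alpha_{n_j}))$ converges in $\mathbb R$. Combining via $M_1(u_n)(t) = \varphi^{-1}(\alpha_n) + \int_0^t (M_1(u_n))'(s)\,ds$ yields convergence of $(M_1(u_{n_j}))$ in $C^1$. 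A standard diagonal/closure argument then upgrades this to compactness of $\overline{M_1(\Lambda)}$.

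The only genuinely delicate step is ensuring that $\varphi^{-1}$ is applied on a set uniformly bounded away from $\pm a$; without this the continuity/uniform continuity of $\varphi^{-1}$ cannot be exploited to transfer equicontinuity from $\beta_n$ to $\varphi^{-1}\circ\beta_n$. This is precisely where the hypothesis $c<a/(2T)$ is essential, and it makes the rest of the argument routine.
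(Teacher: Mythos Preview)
Your argument is correct. Note, however, that the paper does not actually give its own proof of this lemma: it is one of the two results announced as ``taken from \cite{dallos}'' and is stated without proof. The method you use is the standard Arzel\`a--Ascoli route and matches essentially line for line what the paper does for the analogous operator in Lemma~\ref{santos11}, so there is no meaningful divergence in approach. One small remark: since the bound $|f|\le c$ in (\ref{colo}) is global, the hypothesis that $\Lambda$ be bounded is never actually used---$M_1$ in fact maps all of $C^1$ into a single relatively compact set---but this only makes the conclusion stronger.
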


In order to apply Leray-Schauder degree to the  fixed point operator $M_{1}$, we introduce, for  $ \lambda \in [0,1]$, the family of boundary value problems
\begin{equation}\label{misto2}
\left\{\begin{array}{lll}
(\varphi(u' ))'  = \lambda f(t,u,u') & & \\
u(T)=u(0)=u'(T). 
\end{array}\right.
\end{equation}
Notice that (\ref{misto2}) coincide with (\ref{misto1}) for $\lambda =1$. For  each $ \lambda \in [0,1]$, we can define on $C^{1}$ the operator $M(\lambda,\cdot)$, where $M$  is defined on $[0,1]\times C^{1}$  by
\begin{equation}\label{colo2}
M(\lambda,u)=\varphi^{-1}(-Q_{\varphi}(\lambda K(N_{f}(u)))) + H\left(\varphi^{-1} \left[ \lambda K(N_{f}(u))-Q_{\varphi}( \lambda K(N_{f}(u)))\right]\right).
\end{equation}
Using the Arzel\`a-Ascoli  theorem it is not difficult to see  that $M$ is completely continuous.

\begin{lemma}\label{dallos11}
If $(\lambda,u)\in [0,1]\times C^{1}$ is such that $u=M(\lambda,u)$, then $u$ is a a solution of (\ref{misto2}).
\end {lemma}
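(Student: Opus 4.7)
The plan is to unravel the fixed point equation $u=M(\lambda,u)$ into (i) the three boundary conditions and (ii) the differential equation, directly from the definition of $M$ in (\ref{colo2}). First I would observe that the hypothesis (\ref{colo}) together with $\|K(v)\|_{\infty}\le T\|v\|_{\infty}$ gives $\|\lambda K(N_f(u))\|_{\infty}\le Tc<a/2$, so $\lambda K(N_f(u))$ lies in the set $B$ of Lemma \ref{lebema} and everything in sight (the scalar $Q_\varphi(\lambda K(N_f(u)))$ and the function inside $\varphi^{-1}$) is well defined.

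Next I would exploit the structure of $M(\lambda,u)$: its first summand is a constant, so evaluating the identity $u=M(\lambda,u)$ at $t=0$ yields
\[
u(0)=\varphi^{-1}\!\left(-Q_{\varphi}(\lambda K(N_{f}(u)))\right).
\]
Differentiating termwise gives
\[
u'(t)=\varphi^{-1}\!\left[\lambda K(N_{f}(u))(t)-Q_{\varphi}(\lambda K(N_{f}(u)))\right],
\]
so applying $\varphi$ produces the key identity
\[
\varphi(u'(t))=\lambda K(N_{f}(u))(t)-Q_{\varphi}(\lambda K(N_{f}(u))).
\]
Since $K(N_f(u))(T)=-\int_T^T f(s,u,u')\,ds=0$, setting $t=T$ gives $\varphi(u'(T))=-Q_{\varphi}(\lambda K(N_f(u)))=\varphi(u(0))$, hence $u'(T)=u(0)$. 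Meanwhile, integrating $u'$ from $0$ to $T$ and using Lemma \ref{lebema} (which says exactly that $\int_0^T\varphi^{-1}[\lambda K(N_f(u))-Q_\varphi(\lambda K(N_f(u)))]\,ds=0$) gives $u(T)=u(0)$. Together this yields the boundary condition $u(T)=u(0)=u'(T)$.

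Finally, differentiating the identity $\varphi(u'(t))=\lambda K(N_f(u))(t)-Q_\varphi(\lambda K(N_f(u)))$ once more — the second summand being a constant — and using $(K(v))'(t)=v(t)$ produces
\[
(\varphi(u'(t)))'=\lambda N_f(u)(t)=\lambda f(t,u(t),u'(t)),
\]
which is the ODE in (\ref{misto2}). Combined with the boundary relations, $u$ is a solution of (\ref{misto2}).

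There is no genuine obstacle here: the proof is essentially a careful bookkeeping exercise, and the only subtle point is realizing that the definition of $Q_\varphi$ supplied by Lemma \ref{lebema} is precisely what is needed to force $u(T)=u(0)$, while the vanishing $K(N_f(u))(T)=0$ is what forces $u'(T)=u(0)$. Those two built-in identities are doing all the work; everything else is composition of continuous operators and one differentiation under the integral.
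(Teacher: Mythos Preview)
Your proof is correct and follows essentially the same route as the paper's own argument: differentiate the fixed-point identity, use $K(N_f(u))(T)=0$ to obtain $u'(T)=u(0)$, invoke the defining property of $Q_\varphi$ from Lemma~\ref{lebema} to obtain $u(T)=u(0)$, and differentiate $\varphi(u')$ once more to recover the ODE. The only (welcome) addition is your explicit verification that $\|\lambda K(N_f(u))\|_\infty<a/2$, which the paper leaves implicit.
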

\begin{proof} 
Let $(\lambda,u)\in [0,1]\times C^{1}$ be such that $ u=M(\lambda,u)$. Then
\begin{equation}\label{ptof}
u(t)=\varphi^{-1}(-Q_{\varphi}( \lambda K(N_{f}(u))))+ H\left( \varphi^{-1}\left[ \lambda K(N_{f}(u))-Q_{\varphi}( \lambda K(N_{f}(u)))\right]\right)(t)
\end{equation} 
for all $t\in\left[0, T\right]$. Since  $\int_0^T \varphi^{-1}\left[\lambda K(N_{f}(u))(t)-Q_{\varphi}( \lambda K(N_{f}(u)))\right]dt=0$, therefore, we have that  $u(0)=u(T)$. Differentiating  (\ref{ptof}), we obtain that 
\begin{center}
$u'(t)=\varphi^{-1}\left[ \lambda K(N_{f}(u))-Q_{\varphi}( \lambda K(N_{f}(u)))\right](t)$. 
\end{center}
 In particular,
\begin{center}
$u'(T)=\varphi^{-1}(0-Q_{\varphi}( \lambda K(N_{f}(u))))= \varphi^{-1}(-Q_{\varphi}( \lambda K(N_{f}(u))))=u(0)$.
\end{center}
Applying $\varphi$ to both members and  differentiating again, we deduce that
\begin{center}
$(\varphi(u'(t)))' = \lambda N_f (u)(t)$, \ \  $u(0)=u(T)$, \ $u(0)=u'(T)$
\end{center}
 for all $t\in\left[0, T\right]$. This completes the proof.
\end{proof} 

\begin{remark} 
Note that the reciprocal of Lemma \ref{dallos11} is  true because we can  guarantee that $\left\|\lambda K(N_{f}(u)\right\|_{\infty} <a/2$    for every solution   $u$ of (\ref{misto2}).
\end{remark}

Now we show the existence of at least one solution  for problem (\ref{misto1}) by means of Leray-Schauder degree. This result is inspired on works by Bereanu and Mawhin \cite{ma}.

\begin{theorem}\label{fot}
Let $f:[0,T]\times \mathbb{R}\times \mathbb{R} \longrightarrow \mathbb{R}$ be continuous. If $f$ satisfies the condition (\ref{colo}), then  the problem (\ref{misto1})  has at least one solution.
\end {theorem}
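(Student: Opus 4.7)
The plan is to apply Leray--Schauder degree to the completely continuous homotopy $M(\lambda,\cdot)$ defined in (\ref{colo2}). By Lemma \ref{dallos11}, any fixed point of $M(\lambda,\cdot)$ corresponds to a solution of (\ref{misto2}), and for $\lambda=1$ a solution of (\ref{misto1}). The strategy is the usual one: obtain $\lambda$-uniform a priori bounds on the fixed points, deform to a trivial operator at $\lambda=0$ whose degree is nonzero, and conclude by homotopy invariance.

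The a priori bound step uses hypothesis (\ref{colo}) in an essential way. For any $(\lambda,u)\in[0,1]\times C^{1}$, we have $|K(N_{f}(u))(t)|\le \int_{t}^{T}|f(s,u(s),u'(s))|\,ds \le cT$, so $\|\lambda K(N_{f}(u))\|_{\infty}\le cT<a/2$. This ensures $\lambda K(N_{f}(u))$ lies in the ball $B$ of Lemma \ref{lebema}, and because $Q_{\varphi}(\lambda K(N_{f}(u)))$ belongs to the image of $\lambda K(N_{f}(u))$ it also satisfies $|Q_{\varphi}(\lambda K(N_{f}(u)))|\le cT$. Hence the argument of $\varphi^{-1}$ appearing in $M(\lambda,u)$ is bounded in absolute value by $2cT<a$.

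Now suppose $u=M(\lambda,u)$. Differentiating (\ref{colo2}) gives $u'(t)=\varphi^{-1}\bigl[\lambda K(N_{f}(u))(t)-Q_{\varphi}(\lambda K(N_{f}(u)))\bigr]$, so
\begin{equation*}
\|u'\|_{\infty}\le \max\bigl\{|\varphi^{-1}(2cT)|,|\varphi^{-1}(-2cT)|\bigr\}=:r_{1},
\end{equation*}
and $|u(0)|=|\varphi^{-1}(-Q_{\varphi}(\lambda K(N_{f}(u))))|\le \max\{|\varphi^{-1}(cT)|,|\varphi^{-1}(-cT)|\}=:r_{0}$. Thus $\|u\|_{\infty}\le r_{0}+Tr_{1}$ and therefore $\|u\|_{1}\le r_{0}+(T+1)r_{1}=:R$. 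Fix any $R'>R$ and set $U=B_{R'}(0)\subset C^{1}$; then $u\neq M(\lambda,u)$ for every $(\lambda,u)\in[0,1]\times\partial U$.

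By homotopy invariance of the Leray--Schauder degree,
\begin{equation*}
\deg_{LS}(I-M(1,\cdot),U,0)=\deg_{LS}(I-M(0,\cdot),U,0).
\end{equation*}
Finally, at $\lambda=0$ the defining relation of $Q_{\varphi}$ applied to $h\equiv 0$ forces $\varphi^{-1}(-Q_{\varphi}(0))=0$, hence $Q_{\varphi}(0)=0$ and $M(0,u)\equiv 0$. Consequently $\deg_{LS}(I-M(0,\cdot),U,0)=\deg_{LS}(I,U,0)=1$, so $M(1,\cdot)$ has a fixed point in $U$, which is a solution of (\ref{misto1}). The only delicate point is verifying that $Q_{\varphi}$ is always evaluated inside its domain $B$; this is precisely why the hypothesis $c<a/(2T)$ is needed, and it is not really an obstacle but rather the whole content of the argument.
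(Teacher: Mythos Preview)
Your proof is correct and follows essentially the same route as the paper's: derive the a priori bound $\|\varphi(u')\|_\infty\le 2cT<a$, bound $\|u\|_1$ uniformly in $\lambda$, and use homotopy invariance to reduce to $M(0,\cdot)\equiv 0$ with degree $1$. The only cosmetic difference is that the paper bounds $|u(0)|$ via the boundary condition $u(0)=u'(T)$ (giving $|u(0)|\le r_1$) rather than directly from the formula $u(0)=\varphi^{-1}(-Q_\varphi(\lambda K(N_f(u))))$ as you do; your explicit verifications that $\lambda K(N_f(u))\in B$ and that $Q_\varphi(0)=0$ are in fact more careful than the paper's own presentation.
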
 

\begin{proof}
Let $(\lambda,u)\in [0,1]\times C^{1}$ be such that $ u=M(\lambda,u)$. Then
\begin{center}
$u(t)=\varphi^{-1}(-Q_{\varphi}( \lambda K(N_{f}(u))))+ H\left( \varphi^{-1}\left[ \lambda K(N_{f}(u))-Q_{\varphi}( \lambda K(N_{f}(u)))\right]\right)(t)$
\end{center}
for all $t\in\left[0, T\right]$. Differentiating, we obtain that
\begin{align*}
u'(t)&=\varphi^{-1}\left[\lambda K(N_f(u))- Q_{\varphi}(\lambda K(N_f(u)))\right](t)\\
&= \varphi^{-1}\left[\lambda K(N_f(u))(t)- Q_{\varphi}(\lambda K(N_f(u)))\right].
\end{align*}
Applying   $\varphi$  to both of its members we have that 
\begin{center}
 $ \varphi(u'(t))= \lambda K(N_f(u))(t)- Q_{\varphi}(\lambda K(N_f(u)))  \ \ \ (t\in [0, T])$.
\end{center}
Using the fact that $f$ is bounded, we deduce the elementary inequality
\begin{center}
$\left\|\varphi(u') \right\|_{\infty}\leq 2cT<a$.
\end{center}
Hence,
\begin{center}
$\left\|u'\right\|_{\infty}\leq L$,
\end{center}
where  $L= \max\left\{\left|\varphi^{-1}(2cT)\right|,\ \left|\varphi^{-1}(-2cT)\right|\right\}$.
Because  $u\in C^{1}$ is such that  $u(0)=u'(T)$ we have that
\begin{center}
$\left|u(t)\right|\leq \left|u(0)\right|+ \int_ 0 ^T  |u'(s)| ds \leq L +LT   \  \  \  (t\in[0,T] ) $,
\end{center}
and hence 
\begin{center}
$\left\|u\right\|_{1} =\left\|u\right\|_{\infty}+ \left\|u'\right\|_{\infty} \leq L+LT +L=L(2+T)$.
\end{center}

Let $M$ be operator given by (\ref{colo2}) and let $\rho>L(2+T)$. Using the homotopy invariance of the  Leray-Schauder degree, we have
\begin{center}
$deg_{LS}(I-M(0,\cdot),B_{\rho}(0),0)= deg_{LS}(I-M(1,\cdot),B_{\rho}(0),0)$.
\end{center}
On the other hand, we have that
\begin{center}
$deg_{LS}(I-M(0,\cdot),B_{\rho}(0),0) = deg_{LS}(I ,B_{\rho}(0),0)=1$.
\end{center}
Then, from the existence property of Leray-Schauder degree \cite{man7}, there exists $u\in B_{\rho}(0)$ such that $u=M(1,u)$, which is a solution for (\ref{misto1}).
\end{proof} 

\begin{example}
Consider the boundary value problem
\begin{equation}\label{exemplo1.3}
\left\{\begin{array}{lll}
\left(\frac{u'}{\sqrt{1+u'^{2}}} \right)' = \beta cos u & & \\
 u(0)=u(T)=u'(T). 
\end{array}\right.
\end{equation}
So, we can choose $\beta<\frac{1}{2T}$ to see Theorem \ref{fot} holds and so the problem \ref{exemplo1.3} has at least one solution.
\end{example}



\section*{Acknowledgements} 
This research was supported by  CAPES and CNPq/Brazil. The author would like to thank to Dr. Pierluigi Benevieri for his kind advice and for the constructive revision of this paper.

\bibliographystyle{plain}

\renewcommand\bibname{References Bibliogr\'aficas}

\end{document}